\definecolor {processblue}{cmyk}{0.96,0,0,0}
\theoremstyle{definition}
\newcommand{\cmark}{{\color{blue}\ding{51}}}%
\newcommand{\xmark}{{\color{red}\ding{55}}}%
\newcommand{\minimizing}[1]{\underset{#1}{\operatorname{Minimizing}}\;}
\algrenewcommand\alglinenumber[1]{
    {\sf\footnotesize\addfontfeatures{Colour=888888,Numbers=Monospaced}#1}}
\algrenewcommand\algorithmicrequire{\textbf{Precondition:}}
\algrenewcommand\algorithmicensure{\textbf{Postcondition:}}
\mathchardef\mhyphen="2D
\renewcommand{\algorithmicrequire}{\textbf{Input:}}
\newcommand*\samethanks[1][\value{footnote}]{\footnotemark[#1]}
\newcommand{\BA}{\begin{array}}
	\newcommand{\EA}{\end{array}}
\newcommand{\argmin}{\mathop{\rm argmin}}
\newcommand{\argmax}{\mathop{\rm argmax}}
\newtheorem{theorem}{Theorem}[section]
\long\def\@makecaption#1#2{
	\vskip 9pt
	\begin{small}
		\setbox\@tempboxa\hbox{{\bf #1:} #2}
		\ifdim \wd\@tempboxa > 5.5in
		\begin{center}
			\begin{minipage}[t]{5.5in}
				\addtolength{\baselineskip}{-0.95pt}
				{\bf #1:} #2 \par
				\addtolength{\baselineskip}{0.95pt}
			\end{minipage}
		\end{center}
		\else
		\hbox to\hsize{\hfil\box\@tempboxa\hfil}
		\fi
	\end{small}\par
}
\newcounter{oursection}
\newcounter{lecture}
\title{Accelerated Parallel and Distributed Algorithm using Limited Internal Memory for Nonnegative Matrix Factorization}
\author{Duy Khuong Nguyen
		~\thanks{Japan Advanced Institute of Science and Technology, Japan}
		~\thanks{University of Engineering and Technology, Vietnam National University, Hanoi, Vietnam}
	\and Tu-Bao Ho
		~\samethanks[1]
		~\thanks{John von Neumann Institute, Vietnam National University, Ho Chi Minh City, Vietnam}}
\begin{document}
\maketitle

\begin{abstract}
Nonnegative matrix factorization (NMF) is a powerful technique for dimension reduction, extracting latent factors and learning part-based representation. For large datasets, NMF performance depends on some major issues: fast algorithms, fully parallel distributed feasibility and limited internal memory. This research aims to design a fast fully parallel and distributed algorithm using limited internal memory to reach high NMF performance for large datasets. In particular, we propose a flexible accelerated algorithm for NMF with all its $L_1$ $L_2$ regularized variants based on full decomposition, which is a combination of an anti-lopsided algorithm and a fast block coordinate descent algorithm. The proposed algorithm takes advantages of both these algorithms to achieve a linear convergence rate of $\mathcal{O}(1-\frac{1}{||Q||_2})^k$ in optimizing each factor matrix when fixing the other factor one in the sub-space of passive variables, where $r$ is the number of latent components; where $\sqrt{r} \leq ||Q||_2 \leq r$. In addition, the algorithm can exploit the data sparseness to run on large datasets with limited internal memory of machines. Furthermore, our experimental results are highly competitive with 7 state-of-the-art methods about three significant aspects of convergence, optimality and average of the iteration number. Therefore, the proposed algorithm is superior to fast block coordinate descent methods and accelerated methods. 

\vspace{0.5cm}
\textbf{Keywords}: Non-negative matrix factorization, Anti-lopsided algorithm, Cooridinate descent algorithm, and Parallel and distributed algorithm.
\end{abstract}

\clearpage
\tableofcontents
\clearpage

\section{Introduction}

Nonnegative matrix factorization (NMF) is a powerful technique widely used in applications of data mining, signal processing, computer vision, bioinformatics, etc.~\cite{Zhang2011a}. Fundamentally, NMF has two main purposes. First, it reduces dimension of data making learning algorithms faster and more effective as they often work less effectively due to the curse of dimensionality \cite{Helen2005}. Second, NMF helps extracting latent components and learning part-based representation, which are the significant distinction from other dimension reduction methods such as Principal Component Analysis (PCA), Independent Component Analysis (ICA), Vector Quantization (VQ), etc. This feature originates from transforming data into lower dimension of latent components and non-negativity constraints \cite{Donoho2004,Gillis2014ArXiv,Lee1999}. 

In the last decade of fast development, there were remarkable milestones. The two first milestones in early days of the NMF historical development were its mathematical formulations as positive matrix factorization with Byzantine algorithms~\cite{Paatero1994} and as parts-based representation with a simple effective algorithm~\cite{Lee1999}. The last decade has witnessed the rapid NMF development~\cite{Zhang2011a,Wang2013}. Various works on NMF can be viewed in three major perspectives: variants of NMF, algorithms and applications. In particular, variants of NMF are based on either divergence functions~\cite{Seung2001,Zhang2011}, or constraints~\cite{Hoyer2004,Pascual2006}, or regularizations~\cite{Choi2008,Li2007non}. Most NMF algorithms were developed along two main directions: geometric greedy algorithms~\cite{Thurau2011} and iterative multiplicative update algorithms. Although geometric greedy algorithms are usually fast, they are hard to trade off complexity, optimality, loss information and sparseness.

More recently, it is well recognized that the most challenging problems in iterative multiplicative update algorithms for NMF are fast learning, limited internal memory, parallel distributed computation, among others. In particular, fast learning is essential in learning NMF models from large datasets, and it is indeed difficult to carry out them when the number of variables is very large. In addition, the limited internal memory is one of the most challenging requirements for big data~\cite{Guan2013}, because data has been exploring rapidly while the internal memory of nodes is always limited. Finally, parallel and distributed computation makes NMF applications feasible for big data~\cite{Liu2010}.

\begin{table}
	\centering
	\begin{threeparttable}
		%\fontsize{7pt}{9pt}\selectfont
		\setlength\tabcolsep{0.5 pt}
		\caption{Comparison Summary of NMF solvers}
		\begin{tabular}{|l|c|c|c|c|c|c|c|c|c|c|}
			\hline%\noalign{\smallskip}
			\multirow{2}{*}{\textbf{Criteria}} & \multicolumn{2}{c|}{\textbf{Inexact}}& \multicolumn{4}{c|}{\textbf{Exact}} &\multicolumn{4}{c|}{\textbf{Accelerated}} \\ \cline{2-11}
			& \textbf{MUR} & \textbf{PrG}  & \textbf{Qn} & \textbf{Nt} & \textbf{AcS} & \textbf{BlP} &\textbf{FCD} & \textbf{AcH} & \textbf{Ne} & \textbf{Alo} \\ 
			%\noalign{\smallskip}
			\hline%\noalign{\smallskip}
			%\rowcolor{Gray2}
			\textbf{Guaranteed Convergence}  & \xmark & \xmark & \xmark &  \xmark &  \xmark &  \xmark & \xmark & \xmark & \cmark$\frac{1}{k^2}$ & \cmark$(1-\frac{1}{||Q||_2})^k$ \\ \hline
			\textbf{Exploit Data Spareness}  & \xmark & \xmark & \xmark & \xmark & \xmark & \xmark & \cmark & \cmark & \xmark & \cmark \\ \hline
			%\rowcolor{Gray2}
			\textbf{Limited Internal Memory}  & \multicolumn{9}{c|}{$\mathcal{O}(mn+r(r+n+m))$}  & $\mathcal{O}(r(r+n))$ \\ \hline
			\textbf{Fully Parallel \& Distributed}  & \xmark & \xmark & \xmark & \xmark & \xmark  & \xmark & \xmark & \xmark & \xmark & \cmark \\ \hline
			\textbf{Optimization Problem Size}  & \multicolumn{3}{c|}{$r(m,n)$} & $r$ & \multicolumn{3}{c|}{$r(n, m)$}  & $(m,n)$ &  $r(m,n)$ & $r$ \\ 
			%\noalign{\smallskip}
			\hline
		\end{tabular}
		\begin{tablenotes}
			\small
			\item \cmark means considered, and \xmark means not considered\\
			\item $\sqrt{r} \leq ||Q||_2 \leq r$, $n \times m$ is the data matrix size, $r$ is the number of latent components\\
			\item $(m, n)=\text{max}(m, n)$, and $r(m, n)=r.\text{max}(m, n)$\\
			\item Abbreviations: \textbf{MUR}: Multiplicative Update Rule~\cite{Lee1999};
			\textbf{PrG}: Projected Gradient methods~\cite{Lin2007Pro}; 
			\textbf{Nt}: Newton-type methods~\cite{Kim2007};
			\textbf{Qn}: Projected Quasi-Newton~\cite{Zdunek2006}; 
			\textbf{AcS}: Fast Active-set-like method~\cite{Kim2008}; 
			\textbf{BlP}:  Block Principal Pivoting method~\cite{Kim2008Toward};
			\textbf{FCD}: Fast Coordinate Descent methods with variable selection~\cite{Hsieh2011};
			\textbf{AcH}:  Accelerated Hierarchical Alternating Least Squares~\cite{Gillis2012HALS};
			\textbf{Nev}: Nesterov's optimal gradient method~\cite{Guan2012};
			\textbf{Alo}: The proposed method.
		\end{tablenotes}
		\label{tab:Algorithms}
	\end{threeparttable}
\end{table}

To deal with these challenges, this work develops an accelerated algorithm for NMF and its $L_1$ $L_2$  regularized variants having several major advantages that are summarized in Table~\ref{tab:Algorithms}. In this paper, we contribute five folders as follows:

\indent $\bullet$ \textit{NMF and its variants}: We fully decompose NMF and its $L_1$ $L_2$ regularized variants into non-negative quadratic programming problems. This decomposition makes the proposed algorithm flexible to adapt all  $L_1$ $L_2$ regularized NMF in an unified framework that can trade-off the quality of information loss, sparsity and smoothness.

\indent $\bullet$ \textit{Algortihm}: We employ a combinational algorithm of an anti-lopsided algorithm and a fast block coordinate descent algorithm for non-negative quadratic programming. The algorithm reduces variable scaling problems to achieve linear convergence rate of $(1-\frac{1}{||Q||_2})^k$ in optimizing each factor matrix in the sub-space of passive variables, which is advanced to fast coordinate methods and accelerated methods in terms of efficiency as well as convergence rate. In addition, the size of optimization problem is reduced into $r$ ($r \ll m, n$), which is the smallest among the state-of-the-art methods. Hence, the algorithm has the low complexity and converges very fast to the optimal solution, and it is highly potential to be applied in alternating least squares methods for factorization models.

\indent $\bullet$ \textit{Parallel and Distribution}: The proposed algorithms are fully parallel and distributed on limited internal memory systems, which is crucial for big data when computing nodes having limited internal memory that cannot hold the whole dataset.   

\indent $\bullet$ \textit{Implementation}:  The proposed algorithms are convenient to implement for hybrid multi-core distributed systems because  this algorithm works on each individual instance and each latent feature.

\indent $\bullet$ \textit{Comparision}: This is the first time that state-of-the-art algorithms in different research directions for NMF are compared together.

The rest of paper is organized as follows: Section \ref{sec:Background} discusses the background and related works of NMF; Section \ref{sec:OurAlgorithm} mentions our proposed algorithm; Section \ref{sec:Complexity} gives a complexity analysis of our proposed algorithms; Section \ref{sec:Experimental} experimentally compares our proposed algorithm with state-of-the-art algorithms for NMF among remarkable approaches; our conclusion is stated in Section \ref{sec:Conclusion}.

\section{Background and Related Works}
\label{sec:Background}

\subsection{Background}

%Nonnegative matrix factorization (NMF) is a powerful technique for dimension reduction, extracting latent factors and learning part-based representation by factorizing a nonnegative matrix into a product of two factor matrices. 
Mathematically, NMF in Frobenius norm is defined as follows:

\noindent \textbf{Definition 1 [NMF]:} {\it Given a dataset consisting of $m$ vectors in a $n$-dimension space $V = [V_1, V_2,$ $..., V_m] \in R^{n \times m}_+$, where each vector presents a data instance. NMF seeks to decompose $V$ into a product of two nonnegative factorizing matrices $G$ and $F$, where $G = [G_1, ..., G_r] \in R^{n \times r}_+$ and $F = [F_1, ..., F_m] \in R^{r \times m}_+$ are the latent component matrix and the coefficient matrix respectively, $V \approx GF$, in which the quality of approximation can be guaranteed by the objective function in Frobenius norm: $D(V||GF) = ||V-GF||_2^2$.
}

Although NMF is a non-convex problem, optimizing each factor matrix when fixing the other one is a convex problem. In other words, $F$ can be traced when $G$ is fixed, and vice versa. Furthermore, $F$ and $G$ have different roles although they are symmetric in the objective function. $G$ are latent components to represent data instances $V$ by coefficients $F$. Hence, NMF can be considered as a latent factor model of latent components $G$, and learning this model is equivalent to find out latent components $G$. Therefore, in this paper, we propose an accelerated parallel and distributed algorithm to learn NMF models $G$ for large datasets.

\subsection{Related works}

NMF algorithms can be divided into two groups: the greedy algorithms and the iterative multiplicative update algorithms. The greedy algorithms~\cite{Thurau2011} are often based on geometric interpret-ability, and they can be extremely fast to deal with large datasets. However, it is hard to trade off complexity, optimality, loss information and sparseness. The iterative multiplicative update algorithms such as \enquote{two-block coordinate descent} often consist of two steps, each of them fixes one of two matrices to replace the other matrix for obtaining the convergence of the objective function. There are numerous studies on these algorithms, see Table~\ref{tab:Algorithms}, because NMF is nonconvex, though two steps corresponding to two non-negative least square (NNLS) sub-problems are convex~\cite{Guan2012,Kim2014}. In addition, various constraints and optimization strategies have been used to trade off the convexity,  information loss, complexity, sparsity, and numerical instability. 

Based on the optimization updating strategy, these iterative multiplicative update algorithms can be further divided into three sub-groups:

$\bullet$ \textit {Inexact Block Coordinate Descent}: The algorithms' common characteristic is their usage of gradient methods to seek an approximate solution for NNLS problems, which is neither optimal nor fulfilling of fast approximations and accelerated conditions. Lee {\em et al.} \cite{Lee1999} proposed the (basic) NMF problem and simple multiplicative updating rule (MUR) algorithm using first-order gradient method to learn the part-based representation. Seung {\em at al.} \cite{Seung2001} concerned rescaling gradient factors with carefully selected learning rate to achieve a faster convergence rate. Subsequently, Lin~\cite{Lin2007Con} modified MUR, which is theoretically proved getting a stationary point (a local minimum optimization). However, that algorithm cannot improve the convergence rate. Berry {\em et al.}~\cite{Berry2007} projected nonnegative least square (PNLS) solutions into nonnegative quadratic space by setting negative entries in the matrices to zero. Although this algorithm does not guarantee the convergence, it is widely applied in real applications.
In addition, Bonettini {\em et al.}~\cite{Bonettini2011} used line search based on Amijo rule to obtain better solutions for matrices. Theoretically, this method can achieve optimal solutions for factor matrices as exact block coordinate descent group, but it very slowly tends to  stationary points because the line search is time-consuming.

$\bullet$ \textit {Exact Block Coordinate Descent}: In contrast to the first sub-group, the common characteristic in this group is obtaining optimal solutions for two NNLS problems in each iteration. Zdunek {\em et al.}~\cite{Zdunek2006} employed second-order quasi-Newton method with inverse of Hessian matrix to estimate the step size, aiming to a faster convergence than projected methods. However, this algorithm may be slow and non-stable because of the line search. Subsequently, Kim {\em et al.}~\cite{Kim2007} used rank-one to Broyden-Fletcher-Goldfarb-Shanno (BFGS) algorithm to approximate the inverse of Hessian matrix.  Furthermore, Chih-Jen Lin~\cite{Lin2007Pro} proposed several algorithms based on projected gradient methods and exact line search. Theoretically, this method can obtain more accurate solutions, however it is time-consuming because of exact line search and the number of iterations increased by the large number of variables. Moreover, Kim {\em et al.}~\cite{Kim2008,Kim2008Toward} proposed two active-set methods based on Karush-Kuhn-Tucker (KKT) conditions, in which the variables are divided into two sets: a free set and an active set. Only the free set contains variables that can optimize the objective functions. Removing the number of redundant variables makes their algorithms improve the convergence rate significantly. However, the method still has heavy computation for large-scale problems. 

$\bullet$ \textit {Accelerated Block Coordinate Descent}: The accelerated methods use fast solution approximations satisfying accelerated conditions to reduce the complexity and to keep fast convergence. The accelerated conditions are different constraints in different methods to guarantee convergence to the optimal solution in comparison with the initial value. These accelerated methods are developed due to the limitation of inexact methods having slow convergence, and exact methods having high complexity in each iteration. Particularly, for inexact methods, they have slow convergence because of the high complexity of solution approximations in each iteration or a large number of iterations that leads to the highly expensive computation between two sequential iterations. Furthermore, the exact methods have high complexity in each iteration, however obtaining optimal solutions in every iteration is controversial because it can lead to zig-zag problems when optimizing a non-convex function of two independent sets of variables.

Firstly, Hsieh {\em et al.}~\cite{Hsieh2011} proposed a fast coordinate descent method with the best variable selection to reduce the objective function. The algorithm iteratively selects variables to update the approximate solution until the accelerated stopping condition $max_{ij} D^G_{ij} < \epsilon p_{\text{init}}$ satisfied, where $D^G_{ij}$ is the reduction of the objective function based on the variable $G_{ij}$, and $p_{\text{init}}$ is the maximum initial reduction over the matrix $G$.
Although the greedy update method does not have guaranteed convergence, it has the fast convergence speed in many reports.

Subsequently, Gillis and Glineur~\cite{Gillis2012HALS} proposed a number of accelerated algorithms using fast approximation by fixing all variables but excepting a single column of factor matrices. This framework improved significantly the effectiveness of multiplicative updates~\cite{Lee2001}, hierarchical alternating least squares (HALS) algorithms~\cite{Cichocki2007} and projected gradients \cite{Lin2007Pro}. These algorithms achieve the accelerated condition in each iteration such as that $||G^{(k,l+1)}-G^{(k,l)}||^2_2 \leq \epsilon ||G^{(k,1)}-G^{(k,0)}||^2_2$ is the stopping condition when optimizing the objective function on $G$ if fixing $F$. Although these greedy algorithms does not have guaranteed convergence, their results are highly competitive with the inexact and exact methods. 

More recently, Guan {\em et al.}~\cite{Guan2012} employed Nesterov's optimal methods to optimize NNLS with fast convergence rate $\mathcal{O}(1/k^2)$ to achieve the accelerated convergence condition $||\frac{\partial f}{\partial G_{(k, l+1)}}||^2_2 \leq \epsilon ||\frac{\partial f}{\partial G_{(k, 0)}}||^2_2$. Although   Guan {\em et al.}'s method~\cite{Guan2012} has a fast convergence rate $\mathcal{O}(1/k^2)$, it has several drawbacks such as working on the whole factor matrices, and less flexibility for regularized NMF variants. Furthermore, this approach does not consider the issues of parallel and distribution, and they require numerous iterations to satisfy the accelerated condition because the step size is limited by $\frac{1}{L}$, where $L$ is Lipschitz constant. 

To deal with the above issues of accelerated methods, in next section, we propose an accelerated parallel and distributed algorithm for NMF and its regularized $L_1\ L_2$ variants with linear convergence in optimizing each factor matrix when fixing the other factor one.

\section{Proposed Algorithm}
\label{sec:OurAlgorithm}

To read easily, this section hierarchically presents our proposed algorithm. First, an iterative multiplicative update accelerated algorithm is introduced. Then, a transformational technique fully decomposes the objective functions of NMF into basic computation units as nonnegative quadratic programming (NQP) problems. After that, a modified version of the algorithm is proposed to deal with the issues of parallel and distributed systems. Subsequently, a combinational method of an anti-lopsided algorithm and a fast coordinate descent algorithm is developed to effectively solve NQP problems. Finally, extensions for $L_1 L_2$ regularized NMF is discussed.

\subsection{Iterative multiplicative update accelerated algorithm}

For solving NMF, we employ an iterative multiplicative update accelerated algorithm, like expectation-maximization (\textit{EM}) algorithm, presented in Algorithm~\ref{algo:IMU}. This algorithm consists of two main steps: one for finding $F^+$ ($F^+$ is updated $F$ in the iteration) when fixing $G$ and the other for finding $G^+$ when fixing $F$. In the first step called \textit{E-step}, we find $F^+$, each column of which $F^+_i$ is the new representation of a data instance $V_i$ in the new space of latent components $G$. Meanwhile, the other one, called \textit{M-step}, learns new latent components.

\begin{algorithm2e}
	\KwIn{Data matrix $V=\{V_i\}_{i=1}^m \in R^{n \times m}_+$ and $r$.}
	\KwOut{Latent components $G=\{G_k\}_{k=1}^r$.}
	\Begin{
		Randomize $r$ nonnegative latent components $\in R_+^{n \times r}$ \;
		\Repeat{Convergence condition is satisfied}{
			\textbf{E-step:} Fixing $G$ to find $F^+$ such that the accelerated condition is satisfied\;
			%$||\frac{\partial J(V||GF^+)}{\partial F}||^2_2 \leq \epsilon ||\frac{\partial J(V||GF)}{\partial F}||^2_2$\;
			\textbf{M-step:} Fixing $F$ to find $G^+$ such that the accelerated condition is satisfied\;
			%$||\frac{\partial J(V||G^+F)}{\partial G}||^2_2 \leq \epsilon ||\frac{\partial J(V||GF)}{\partial G}||^2_2$\;
		}
	}
	\caption{Iterative Multiplicative Update Accelerated Algorithm}\label{algo:IMU} 
\end{algorithm2e}

\subsection{Full decomposition for NMF}

This section discusses decomposing the objective function of NMF into non-negative quadratic programming (NQP) problems, which aims to fully parallelize and distribute the NMF computation. Particularly, in Algorithm \ref{algo:IMU}, the E-step is to find new coordinates of data instances in the space of latent components $G$ by minimizing $J(V || GF) = ||V - GF||^2_2 = \sum_{i=1}^m ||V_{i} - GF_i||^2_2$. Hence, minimizing $J(V|| GF)$ is equivalent to independently minimizing $||V_{i} - GF_i||^2_2$  for each instance $i$ since $G$ is fixed. Similarly, the M-step is also equivalent to independently minimizing $||V^T_{j} - F^TG^T_j||^2_2$  for each feature $j$, where $F$ is fixed. Hence, the basic computation units are nonnegative least-squares (NNLS) problems~\cite{Lawson1974}. 

For large datasets $n, m \gg r$, we equivalently turn these problems into nonnegative quadratic programmings (NQP):

\begin{equation}\label{eq:NNLS}
\begin{aligned}
& \underset{x}{\text{minimize}}
&& \frac{1}{2}||Ax - b||^2_2 \\
&\text{subject to}
&& x \succeq 0 \in R^r\\
&\text{where}
&& A \in R_+^{nr}, b \in R_+^n
\end{aligned}
\end{equation}
equivalent to
\begin{equation}\label{eq:QP}
\begin{aligned}
& \underset{x}{\text{minimize}}
& & f(x) = \frac{1}{2}x^THx + h^Tx \\
& \text{subject to}
& & x \succeq 0.\\
& \text{where}
& & H = A^TA,  h = -A^Tb\\
\end{aligned}
\end{equation}

\vspace{0.2cm}

Hence, finding new coefficients $F^+$ and new latent components $G^+$ can be fully paralleled and distributed into basic computation units as solving NQP problems. 

\subsection{Parallel and distributed algorithm using limited internal memory}
\label{subsec:ParallelandDistributed}

In this section, we design a parallel and distributed algorithm using limited internal memory for learning NMF model $G$, see Fig.~\ref{fig:DisSysDia}, which is  a modified version of Algorithm~\ref{algo:IMU}.

For large datasets, the computation can be untimely performed in a single process, so parallel and distributed algorithm environments are employed to speed up the computation. For parallel and distributed systems, we often face two major issues: dependency of computation units and limited internal memory computing nodes. In particular, computation units must be independently conducted  as much as possible, since any dependency of computing elements will increase the complexity of implementation and the delay of data transfer over the network that reduces the performance of system. Furthermore, for these parallel distributed systems, computation units are executed on computing nodes within a limited internal memory. In addition, accessing external memory will increase the complexity and reduce the performance. 

\begin{algorithm2e}
	\KwIn{Data matrix $V=\{V_m\}_{i=1}^m \in R^{nm}_+$ and $r$.}
	\KwOut{Latent components $G=\{g_k\}_{k=1}^r$.}
	\Begin{
		Randomize $r$ nonnegative latent components $G$ $\in R_+^{nr}$\;
		\Repeat{Convergence condition is satisfied}{
			$Y = 0 \in R^{nr}$ /* $Y = FV^T$*/\;
			$H = 0 \in R^{rr}$  /* $H = FF^T$ /*\;
			$Q = GG^T$\;
			$maxStop = 0$\;
			/*Parallel and distributed*/\\
			\For{$i$ = $1$ to $m$}{
				/*call Algorithm \ref{algo:NQP}*/\;
				$F_i = \minimizing{x \in R^{r} \succeq 0} (x^TQx/2 - V_m^TG^Tx)$\;
				$Y = Y + F_iV^T_i$\;
				$H = H + F_iF^T_i$\;
			}
			/*Parallel and distributed*/\\
			\For{$j$ = $1$ to $n$}{
				/*call Algorithm  \ref{algo:NQP}*/\;
				$G_j = \minimizing{x \in R^{r} \succeq 0} (x^THx/2-{Y_n}^Tx)$ \;
			}
		}
	}
	\caption{Parallel and Distributed Algorithm}\label{algo:Parallel} 
\end{algorithm2e}

\begin{figure}
	\centering
	\includegraphics[scale=0.6]{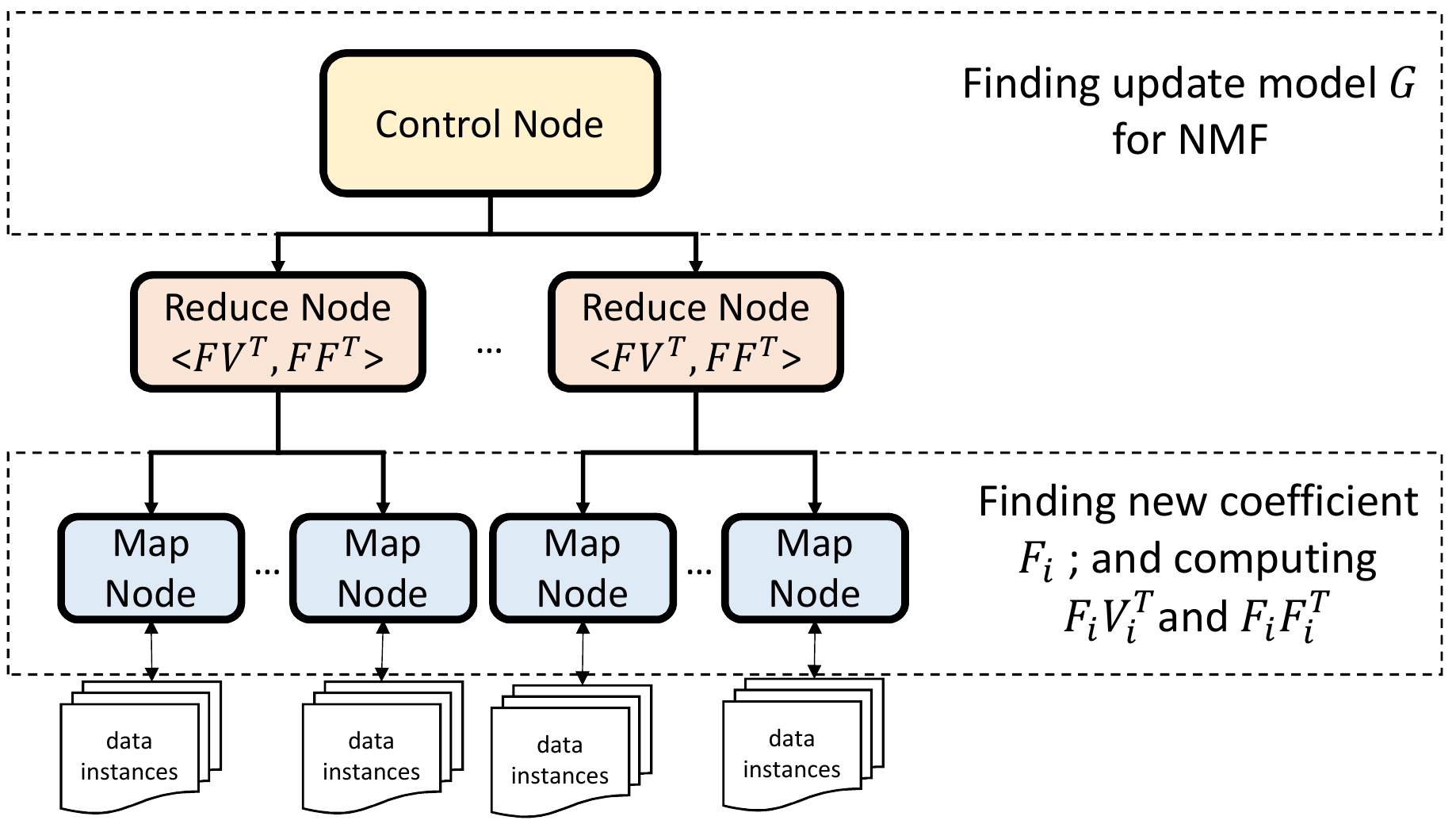}
	\caption{Distributed System Diagram for NMF}
	\label{fig:DisSysDia}
\end{figure}

For our proposed approach, the computation can be fully paralleled and distributed, and use limited internal memory in computing nodes  because the objective function is properly decomposed  to NQP problems. Particularly, Algorithm~\ref{algo:Parallel} presents a  modified version of  iterative multiplicative update algorithms, in which computation units are fully paralleled and distributed. In addition, $Q=GG^T$ is precomputed to reduce the complexity, and finding new coefficients $F_i$ can be independently computed and distributed. Remarkably, the most heavy computation of $Y=FV^T$ and $H=FF^T$ is divided into computing $F_iV_i^T$ and $F_iF_i^T$ to be parallel and distributed.

Particularly, the distributed system using MapReduce is described in Fig. \ref{fig:DisSysDia}. In this computing model, data instances and the instance projection are parallel and distributed over the Map nodes. The Reduce nodes sum up the results $F_iF_i^T$ and $F_iV_i^T$ of the Map nodes.  Subsequently, in the M-step, the results $FF^T$ and $FV^T$ are employed to compute latent components $G$. This M-step computation can be conducted by a single machine or a distributed system, which depends on the dimension of problem because the time to distribute this computation over the network is usually considerable. 

In comparison with the previous algorithms, this computing model is much more effective than the previous models~\cite{Gemulla2011,Liu2010,Sun2010Large} by the following reasons:

\indent $\bullet$ The necessary memory used in computing nodes is $\mathcal{O}(size(G, Y, H))$ = $\mathcal{O}(r(r+n))$. The necessary memory used in the controlled node is $\mathcal{O}(size(G, Y, $ $H, Q))$ = $\mathcal{O}(r(r+n))$. In practice, approximate solutions of NQP problems should be cached in hard disks in order to increase accuracy and reduce the number of iterations.

\indent $\bullet$ At each distributed iteration, the computation is fully decomposed into basic computations units, which enhances the convergence speed to the optimal solution because the size of optimization is significantly reduced. Furthermore, the expensive computation $FV^T$ and $FF^T$ is fully parallelized and distributed over the computing nodes.

\indent $\bullet$ The computational model is conveniently implemented because computing NMF model is divided into basic computation units as NQP problems that are independently solved, and the optimization is carried out on vectors instead of matrices.

In the next section, we propose a novel algorithm, Algorithm~\ref{algo:NQP},  to solve approximately NQP problems, which is robust and effective because it only uses the first derivative  and does not consider the ill-condition of matrix inverse.

\subsection{Fast algorithm for nonnegative quadratic programming}

In this section, we briefly review the literature before proposing the novel algorithm to solve NQP Problem~\ref{eq:QP} for real large-scale NMF applications.

Regarding algorithms for NNLS and its equivalent problem NQP, numerous algorithms are proposed to deal with high dimension~\cite{Chen2009Non}. Generally, methods for solving NNLS can divided into two groups: active-set and iterative methods~\cite{Chen2009Non}. Active-set methods are traditional to solve accurately~\cite{Bro1997,Lawson1974}. However, they require heavy computation in repeatedly computing $(A^TA)^{-1}$ with different set of passive variables. Hence, iterative methods that can handle multiple active constraints in each iteration have more potential for fast NMF algorithms~\cite{Chen2009Non,Kim2006NNLS,Kim2013}. Hence, iterative methods can deal with more large-scale problems. Among the fast  iterative methods, the coordinate descent method~\cite{Franc2005seq} has fast approximation, but has the zip-zag problem when the solution requires high accuracy.  In addition, accelerated  methods~\cite{Nesterov1983} 
has a fast convergence $\mathcal{O}(1/k^2)$~\cite{Guan2012}, which only require the first order derivative. However, one major disadvantage of the methods is that they require a big number of iterations because their step size is limited by $\frac{1}{M}$ that can  be very small for large-scale problems; where $M$ is Lipschitz constant. More recently, the anti-lopsided algorithm~\cite{Nguyen2015} re-scale variables to obtain a linear convergence in the sub-space of passive variables. Unfortunately, the passive variables are unknown in advance, so several iterations are required to determine them. In addition, the complexity of each iteration is considerable about $\mathcal{O}(r^2)$.

Therefore, we propose a combinational algorithm of  the anti-lopsided algorithm~\cite{Nguyen2015} and the greedy coordinate block descent algorithm~\cite{Hsieh2011} to reduce the number of iterations as well as complexity. Particularly, the proposed algorithm, Algorithm~\ref{algo:NQP}, contains two main steps: The first step, from Line~\ref{lst:line:rescale1} to Line~\ref{lst:line:rescale2}, rescales variables to avoid rescaling problems of the first order methods by replacing $y = x.*\sqrt{diag(H)}$, we have:

\begin{equation}
f(x) = \frac{1}{2}x^THx + h^Tx = \frac{1}{2}y^TQy + q^Ty
\end{equation}

\noindent where $Q=\frac{H}{\sqrt{diag(H)diag(H)^T}}$ and $q=\frac{h}{\sqrt{diag(H)}}$ such that $\frac{\partial^2f}{\partial^2 y_i} = Q_{ii} = \frac{H_{ii}}{\sqrt{H_{ii}H_{ii}}} =1$ for $\forall i$. By the way, the rate of change of a quantity through variables equals to a constant and the exact line search can converge at an exponential rate of $(1-\frac{1}{||Q||_2})^r$~\cite{Nguyen2015} in the sub-space of passive variables.  The passive variables are variables belongs the set $P = \{x_i | x_i > 0\ or\ \nabla f_i < 0\}$ that changes through iterations.

The second step contains a loop of iterations, from Line~\ref{lst:line:loop1} to Line~\ref{lst:line:loop2}, each of  which is clearly divided into two parts: one part from Line~\ref{lst:line:anti1} to Line~\ref{lst:line:anti2} inherited from the anti-lopsided algorithm~\cite{Nguyen2015} and the other from Line~\ref{lst:line:coordinate1} to Line~\ref{lst:line:coordinate2} based on the fast coordinate descent algorithm~\cite{Hsieh2011}. The anti-lopsided algorithm guarantees the linear convergence $(1-\frac{1}{||Q||^2_2})^k\ (||Q||^2_2 \leq r)$ in the sub-space of passive variables to avoid the zip-zag problem of the fast coordinate descent algorithm, while the coordinate block descent algorithm speeds up the convergence to the final optimal set of passive variables. In addition, the complexity of each part is still kept in $\mathcal{O}(r^2)$. As a result, the proposed algorithm will utilize advantages of both algorithms to attain a fast convergence, while retaining the same low complexity $\mathcal{O}(r^2)$ of each iteration.

To comprehend the proposed algorithm's effectiveness, we consider optimizing  Function~\ref{eq:NQP}:
\begin{equation}
\label{eq:NQP}
f(x) = \frac{1}{2}x^T\begin{bmatrix} 1&0.1\\ 0.1&10 \end{bmatrix}x + [-80 -100]x
\end{equation}

The exact search gradient algorithm,  from Line~\ref{lst:line:anti1} to Line~\ref{lst:line:anti2}, starting with $x_0 = [200\ 20]^T$ performs 59 iterations to reach the optimal solution, see Fig.~\ref{fig:not_using_scissored}. However, the proposed algorithm only needs 1 iterations to reach the optimal solution, see Fig.~\ref{fig:using_antilopsided_scissored} because we optimize Function~\ref{eq:NQPI} instead of Function~\ref{eq:NQP}; where Function~\ref{eq:NQPI} is equivalently obtained by applying the steps from Line~\ref{lst:line:anti1} to Line~\ref{lst:line:anti2}. The exact search gradient algorithm becomes much faster because the shape of Function~\ref{eq:NQPI} become more sphere, and its derivative is more effective to optimize the objective function. 
\begin{equation}
\label{eq:NQPI}
f(y) = \frac{1}{2}y^T\begin{bmatrix} 1&\frac{0.1}{\sqrt{10}}\\ \frac{0.1}{\sqrt{10}}&1 \end{bmatrix}y + [\frac{-80}{\sqrt{10}}\ \  \frac{-100}{\sqrt{10}}]y
\end{equation}

\begin{algorithm2e} 
	\caption{Fast Combinational Algorithm for NQP} \label{algo:NQP}
	\KwIn{$H \in R^{r \times r}$ and $h \in R^r$ and $x_0$}
	\KwOut{$x$ minimizing $\frac{1}{2}x^THx + h^Tx$\\
		\ \ \ \ \ \ \ \ \ \ \ \ \ subject to: $x \succeq 0$}
	\Begin{
		/*Having a variable $maxStop = 0$ for each thread of computation */\;
		/*Re-scaling variables*/\;
		$Q = \frac{H}{\sqrt{diag(H)diag(H)^T}}$\; \label{lst:line:rescale1}
		$q = \frac{h}{\sqrt{diag(H)}}$\; 
		/*$\text{Solving NQP: minimizing} f(x) = \frac{1}{2}x^TQx + q^Tx$*/\; 
		$x = x_0.*\sqrt{diag(H)}$\;\label{lst:line:rescale2}
		$\nabla f = Qx + q$\;
		\Repeat{ $(||\tilde{f}_k||^2_2 \leq \epsilon||\tilde{f}_0||^2_2) \text{ or } (||\tilde{f}_k||^2_2  \leq maxStop)$}{ \label{lst:line:loop1}
			/*Exact Line Search*/\;
			$\nabla \bar{f} = \nabla f[x > 0 \text{ or } \nabla f < 0]$\; \label{lst:line:anti1}
			$\alpha = \argmin{\alpha} f(x_k - \alpha \nabla \bar{f}) = \frac{||\nabla \bar{f}||^2_2}{\nabla \bar{f}^TQ\nabla \bar{f}}$\;
			$x_{k} = [x_{k-1} - \alpha \nabla \bar{f}]_+$\;
			$\nabla f_k = \nabla f_k + Q(x_{k} - x_{k-1})$\; \label{lst:line:anti2}
			/*Block Coordinate Descent*/\;
			\For{t=1 to n}{  \label{lst:line:coordinate1}
				$\triangle x_i = max(0, [x_{k}]_i - \frac{f_i}{Q_{ii}}) - [x_{k}]_i\ \forall i$\;
				$p = \argmax{i} |f(x_k) - f(x_k + \triangle x_i)|$\;
				$\nabla f_k = \nabla f_k + Q_p \triangle x_p$\;
				$[x_{k}]_p = [x_{k}]_p + \triangle x_p$\; \label{lst:line:coordinate2}
			}
		}\label{lst:line:loop2}
		$maxStop = max(maxStop, ||\tilde{f}_k||^2_2)$\;
		\Return{$\frac{x_k}{\sqrt{diag(H)}}$} 
	}
\end{algorithm2e}

\begin{figure}
		\centering
		\includegraphics[scale=0.5]{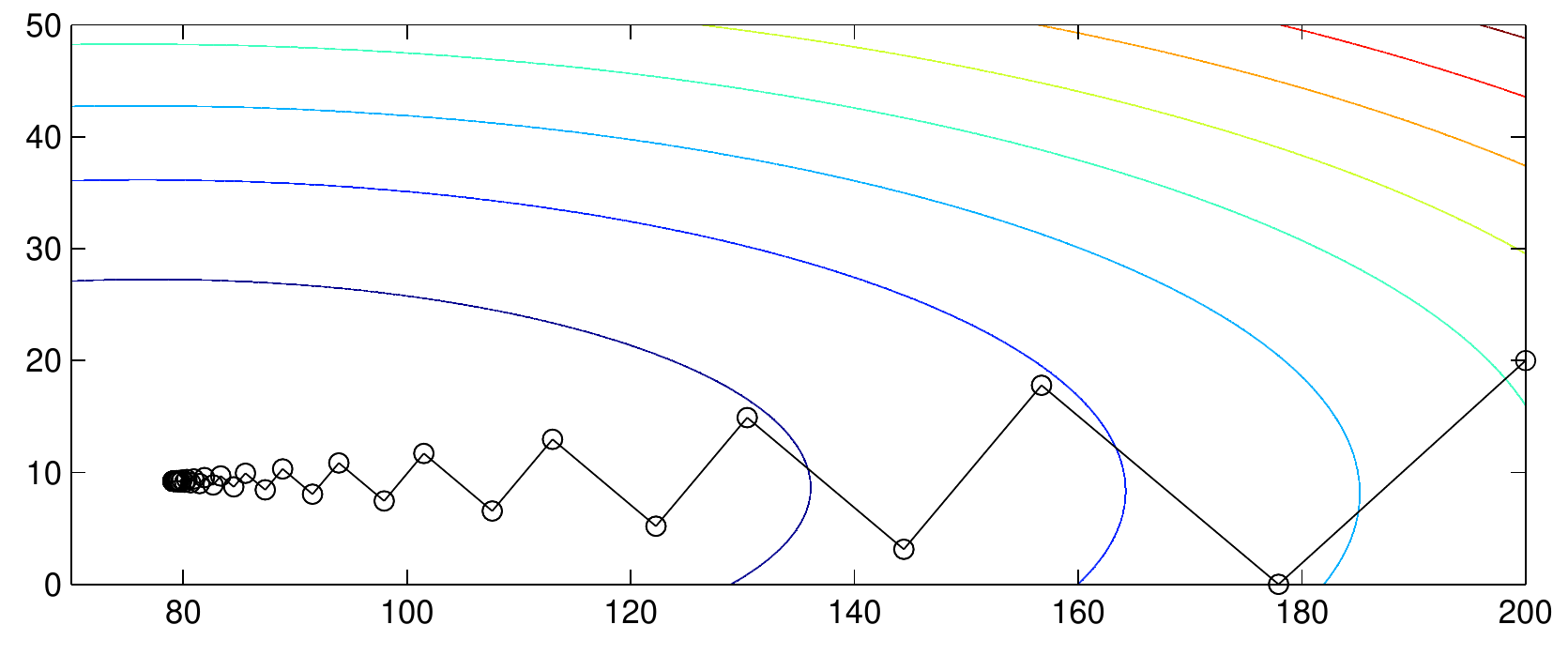}
		\caption{59 optimizing steps in iterative exact line search method using  the first order derivative for the function~\ref{eq:NQP} starting at $x_0=[200\ 20]^T$}
		\label{fig:not_using_scissored}
 \end{figure}
 
\begin{figure}
	\centering
		\includegraphics[scale=0.5]{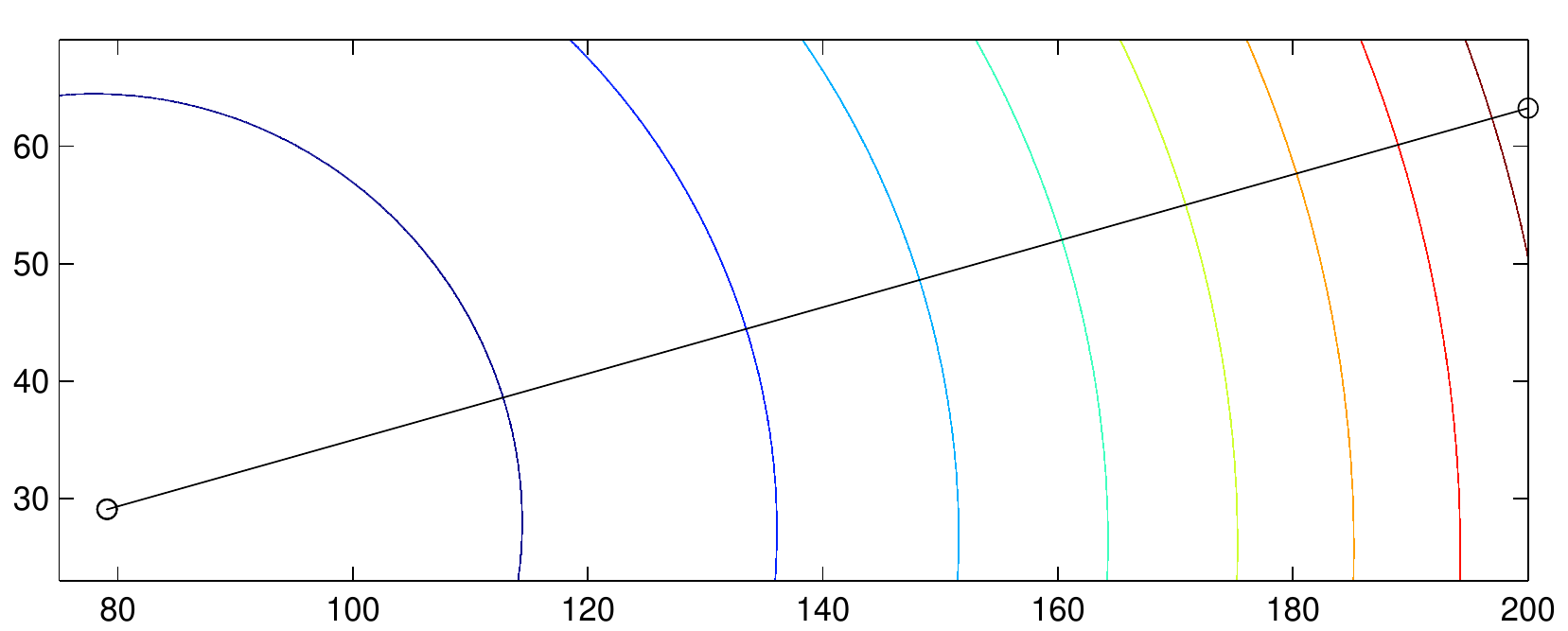}
		\caption{1 optimizing steps in iterative exact line search method using  the first order derivative for the function~\ref{eq:NQPI} starting at $y_0 = x_0\sqrt{diag(H)}$}
		\label{fig:using_antilopsided_scissored}
\end{figure}

Moreover, Algorithm~\ref{algo:NQP} only attains approximate solutions because achieving the optimal solution is controversial for the reasons that its computation is expensive and it can leads to the zig-zag problem in optimizing a non-convex function. In addition, it is necessary to control and balance the quality of the convergence to the optimal solution. Hence, we employ an accelerated condition $(||\tilde{f}_k||^2_2 \leq \epsilon||\tilde{f}_0||^2_2)$ to regulate the quality of the convergence to the optimal solutions of the NQP problems in comparison with initial values and  a fast-break condition $(||\tilde{f}_k||^2_2  \leq maxStop)$ to balance the quality of the convergence among variables in each thread of the computation. As a result, the objective function converges faster through iterations; and the complexity and average of the iteration number are reduced significantly. 

\subsection{Extensions for $L_1$ $L_2$ regularized NMF}

In this section, we consider solutions for $L_1$ $L_2$ regularized NMF variants to control the quality of NMF. $L_1$ regularized NMF~\cite{Hoyer2002Non} aims to achieve sparse solutions in optimization problems. Usually, only the coefficient matrix $F$ is penalized to control its sparsity. Meanwhile, concerning $L_2$ regularized NMF, the penalty terms of $F$ and $G$ are added to control smoothness of solutions in NMF~\cite{Pauca2006}. Fortunately, the objective functions of $L_1$ $L_2$ regularized NMF can be turned into NQP problems, of which solutions are completely similar to the general NMF. Particularly, in the most general variant, the objective function $J(X||GF)$ is formulated by:

\begin{equation}
||X-GF||^2_2 + \mu_1 ||F||_1 + \beta_1 ||G||_1 + \mu_2 ||F||_2^2 + \beta_2 ||G||_2^2
\end{equation}

\noindent where $||.||_1$ is the $L_1$-norm,  $||.||_2$ is the $L_2$-norm, and $\mu_1, \mu_2, \beta_1, \beta_2$ are regularized parameters that tradeoff the sparsity and the smoothness of the information loss. Obviously, both the E-step and  the M-step need to solve the same NNLS problems when one of the two matrices is fixed. For example, in a E-step, we can minimize the objective function by independently solving NQP problems  when fixing $G$:

\begin{equation}\label{eq:L1L2NMF}
\begin{aligned}
&J(X||GF) =  \frac{1}{2}||X-GF||^2_2 + \mu_1 ||F||_1 + \mu_2 ||F||^2_2 + \text{C}\\
& = \sum_{m=1}^{M} ( \frac{1}{2}||X_m-GF_m||^2_2 + \mu_1 (1^K)^TF_m + \mu_2 F_m^TIF_m) + \text{C}\\
&= \sum_{m=1}^{M} ( \frac{1}{2}F_m^TQF_m + q^TF_m) + \text{C}\\
% &\text{where}: Q=G^TG+ 2\lambda I; q^T= - X_m^TG^T + \lambda 1^K
\end{aligned}
\end{equation}

\noindent where $Q=G^TG+ 2\mu_1 I$, $q^T= - X_m^TG^T + \mu_2 1^K$ and $\text{C}$ is a constant.

This transformation from minimizing the objective functions into solving NQP problems independently is comprehensive to understand and simplify the variants of NMF problems as much as possible. As a result, we can conveniently implement NMF and its $L_1$ $L_2$ regularized variants in parallel distributed systems as in sub-section \ref{subsec:ParallelandDistributed}.

In comparison with the previous algorithms that optimizing the objective function works on the whole of matrices, this approach decomposing the objective function is easier to parallelize and distribute the computation. Additionally, it is faster to reach the solutions because it only performs on a smaller set of variables.

\section{Theoretical Analysis}
\label{sec:Complexity}

In this section, we investigate the convergence of Algorithm~\ref{algo:NQP} and the complexity of Algorithm~\ref{algo:Parallel} using
Algorithm~\ref{algo:NQP} 

\subsection{Convergence}

In this section, we only consider the convergence rate of Algorithm~\ref{algo:NQP} by the general NMF for the two following reasons. Firstly, $L_1$ regularized coefficients do not affect on the complexity. Secondly, $L_2$ regularized coefficients are often small, and they change Lipschitz constants $m$ and $M$ by adding a small positive value, where $m$ and $M$ are positive Lipschitz constants of strongly convex function $f(x)$ satisfying $mI \preceq \frac{\partial^2 f}{\partial^2x} \preceq MI$ and $I$ is the identity matrix. Hence,  $L_2$ regularized coefficients slightly change the convergence rate because it depends on $\frac{m}{M}$.

Based on~\cite{Nguyen2015}, consider the complexity of  Algorithm~\ref{algo:NQP}, we have:

\begin{theorem}
	Algorithm~\ref{algo:NQP} linearly converges at the rate of $\mathcal{O}(1-\frac{1}{||Q||_2})^k$ in the sub-space of passive variables, where $\sqrt{r} \leq ||Q||_2 \leq r$, $r$ is the dimension of solutions or the number of latent factors, and $k$ is the number of iterations.
\end{theorem}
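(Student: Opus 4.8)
\noindent\emph{Proof plan.}
The plan is to peel Algorithm~\ref{algo:NQP} into its two constituent moves --- the exact-line-search (anti-lopsided) block of Lines~\ref{lst:line:anti1}--\ref{lst:line:anti2} and the greedy block coordinate descent block of Lines~\ref{lst:line:coordinate1}--\ref{lst:line:coordinate2} --- and to show that, on the eventual subspace of passive variables, the first already forces a geometric decrease of the sub-optimality with ratio $1-\tfrac1{\|Q\|_2}$, while the second can only help; the stated rate then follows by iterating.

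\emph{Step 1 (rescaling and the spectrum of $Q$).} I would first record what the rescaling of Lines~\ref{lst:line:rescale1}--\ref{lst:line:rescale2} buys: the substitution $y=x\odot\sqrt{\diag(H)}$ turns $f(x)=\tfrac12x^THx+h^Tx$ into $\tfrac12y^TQy+q^Ty$ with $Q_{ii}=1$ for all $i$, as stated in the text. Since $Q$ is a positive diagonal rescaling of $A^TA$, it is positive semidefinite, so its eigenvalues are nonnegative; together with $\Tr(Q)=\sum_iQ_{ii}=r$ this already gives the upper bound $\|Q\|_2=\lambdamax(Q)\le\Tr(Q)=r$. The lower bound $\|Q\|_2\ge\sqrt r$ and the remaining spectral facts about the normalised (correlation-type) matrix $Q$ are exactly those of~\cite{Nguyen2015}, which I would cite rather than reprove.

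\emph{Step 2 (reduction to an unconstrained quadratic and the one-step contraction).} Next I would argue that after finitely many outer iterations the passive set $P=\{i: x_i>0 \text{ or } \nabla f_i<0\}$ stabilises --- $f$ is nonincreasing and bounded below, the feasible cone is polyhedral, and a coordinate pinned at $0$ with nonnegative gradient stays pinned --- so that, restricted to $\reals^P$, the iteration is exact-line-search steepest descent on the strongly convex quadratic with Hessian the principal submatrix $Q_P$, the clipping $[\,\cdot\,]_+$ being inactive there. For such a quadratic one line-search step satisfies the classical identity $f(x_k)-f^\star=(1-\rho_k)\,(f(x_{k-1})-f^\star)$, $f^\star$ the minimum of $f$ over $\reals^P$, with
\[
\rho_k \;=\; \frac{\bigl(\|\nabla\bar f\|_2^2\bigr)^2}{\bigl(\nabla\bar f^{\,T}Q_P\nabla\bar f\bigr)\bigl(\nabla\bar f^{\,T}Q_P^{-1}\nabla\bar f\bigr)}.
\]
The contraction estimate of~\cite{Nguyen2015}, which uses precisely the unit-diagonal normalisation of $Q$, bounds $\rho_k\ge 1/\|Q_P\|_2\ge 1/\|Q\|_2$; hence $f(x_k)-f^\star\le\bigl(1-\tfrac1{\|Q\|_2}\bigr)\bigl(f(x_{k-1})-f^\star\bigr)$, equivalently $\|\tilde f_k\|_2^2\le\bigl(1-\tfrac1{\|Q\|_2}\bigr)\|\tilde f_{k-1}\|_2^2$ for the restricted gradient, up to the same subspace caveat. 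Iterating over $k$ gives the $\mathcal{O}\bigl(1-\tfrac1{\|Q\|_2}\bigr)^k$ bound.

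\emph{Step 3 (the coordinate sweep does not hurt; conclusion and main obstacle).} Finally I would observe that every inner pass of Lines~\ref{lst:line:coordinate1}--\ref{lst:line:coordinate2} replaces the current point by an exact one-dimensional minimiser of $f$ along the greedily chosen coordinate (the update $\triangle x_p$ with the $[\,\cdot\,]_+$ clip), so $f$ never increases across it and the block coordinate descent of~\cite{Hsieh2011} can only lower the value reached after the line-search step. Consequently the per-iteration contraction factor $1-\tfrac1{\|Q\|_2}$ of the pure anti-lopsided scheme remains an upper bound for the combined scheme, which proves the theorem. The hard part is Step~2: making rigorous both that the passive set stabilises while the projection $[\,\cdot\,]_+$ is genuinely inactive, and --- above all --- the sharp per-step constant $1/\|Q\|_2$, which is the crux of~\cite{Nguyen2015} and the entire reason for the rescaling, since without the unit-diagonal normalisation exact line search would deliver only the weaker, condition-number-dependent Kantorovich ratio.
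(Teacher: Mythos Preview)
Your plan tracks the paper's: cite the $(1-m/M)^k$ steepest-descent rate from \cite{Nguyen2015} on the rescaled quadratic, pin down $m=1$ and $M=\|Q\|_2$ for $Q$, and note that the greedy coordinate sweep is monotone and so cannot spoil the contraction. The paper's argument is in fact shorter than your Step~2 --- it never writes the Kantorovich ratio $\rho_k$, it simply quotes the $(1-m/M)^k$ bound as ``Remark~1'' from \cite{Nguyen2015} and computes $m,M$ --- and it does not formally address the stabilisation of the passive set either, merely asserting the rate ``in the sub-space of passive variables'' and relegating the role of the coordinate block to the paragraph after the proof, exactly as your Step~3 does.

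There is one real mismatch. You read $\|Q\|_2$ as the spectral norm and argue $\lambda_{\max}(Q)\le\Tr(Q)=r$. In the paper $\|Q\|_2$ denotes the \emph{Frobenius} norm $\bigl(\sum_{ij}Q_{ij}^2\bigr)^{1/2}$; with your spectral reading the lower bound $\sqrt r\le\|Q\|_2$ is simply false (take $Q=I$, giving $\lambda_{\max}=1$), so your Step~1 cannot recover the range stated in the theorem. The paper obtains both estimates by elementary one-liners you did not reproduce: for $m=1$ it uses that $Q_{ij}\ge 0$, $Q_{ii}=1$ and $x\ge 0$ force $x^TQx=\sum_{ij}Q_{ij}x_ix_j\ge\sum_i x_i^2$; for $M=\|Q\|_2$ it applies Cauchy--Schwarz, $\bigl(\sum_{ij}Q_{ij}x_ix_j\bigr)^2\le\bigl(\sum_{ij}Q_{ij}^2\bigr)\bigl(\sum_{ij}x_i^2x_j^2\bigr)$, so $x^TQx\le\|Q\|_2\,x^Tx$; the range $\sqrt r\le\|Q\|_2\le r$ then drops out of $Q_{ii}=1$ together with $|Q_{ij}|=|\cos(H_i,H_j)|\le 1$.
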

\begin{proof}
	
	From \cite{Nguyen2015}, we have:
	
	\textbf{Remark 1:} After $(k+1)$ iterations, $f(x^{k+1}) - f^* \leq (1 - \frac{m}{M})^k (f(x^0) - f^*)$, where $m I \preceq \nabla^2 f \preceq MI$, $f^*$ is the minimum value of $f(x)$, and $f(x)$ is a strongly convex function of the passive variables.
	
	We have $\nabla^2 f = Q$, and
	
	$x^TIx \leq \sum_{i=1}^{r}\sum_{j=1}^{r}Q_{ij}x_ix_j=x^TQx$ since $x \geq 0, Q\geq 0$, and $Q_{ii}=1$.
	$\Rightarrow I \preceq Q$.
	
	Moreover, based on Cauchy-Schwarz inequality, we have:
	\begin{equation*}
	\begin{aligned}
	\ &(\sum_{i=1}^{r}\sum_{j=1}^{r}Q_{ij}x_ix_j)^2 \leq (\sum_{i=1}^{r}\sum_{j=1}^{r} Q_{ij}^2) (\sum_{i=1}^{r}\sum_{j=1}^{r} ({x_i}{x_j})^2) \\
	\Rightarrow &\sum_{i=1}^{r}\sum_{j=1}^{r}Q_{ij}x_ix_j \leq \sqrt{||Q||^2_2 (\sum_{i=1}^{r} {x_i}^2)^2}\\
	\Leftrightarrow & x^TQx \leq ||Q||_2x^TIx \ \ (\forall x) \ \ \Leftrightarrow Q \preceq ||Q||_2I\\
	\end{aligned}
	\end{equation*}
	
	Finally, $\sqrt{r} = \sqrt{\sum_{i=1}^{r} Q_{ii}^2} \leq ||Q||_2 = \sqrt{\sum_{i=1}^{r}\sum_{j=1}^{r}Q_{ij}^2} \leq \sqrt{r^2} = r$ since $-1 \leq Q_{ij} = \cos(H_i, H_j) \leq 1$. Therefore, we have:
	
	\textbf{Remark 2:} $I \preceq \nabla^2 f = Q \preceq ||Q||_2I$; where $\sqrt{r} \leq ||Q||_2 \leq r$. 
	
	From Remark 2 setting $m=1$ and $M=||Q||_2 \leq r$, and Remark 1, we have Theorem~\ref{theo:Complexity}.
\end{proof}

Actually, the exact line search step, from Line~\ref{lst:line:anti1} to Line~\ref{lst:line:anti2} in  Algorithm~\ref{algo:NQP}, guarantees linear convergence of $\mathcal{O}(1-\frac{1}{||Q||_2})^k$ in the sub-space of passive variables. However, the set of passive variables changes through iterations. Hence,  we employ the fast block coordinate descent steps, from Line~\ref{lst:line:coordinate1} to Line~\ref{lst:line:coordinate2} in  Algorithm~\ref{algo:NQP}, that rapidly restrict the domain of solution to converge to the final optimal sub-space of passive variables of the solution. Therefore, the proposed algorithm linearly converges  and requires very few iterations.

\subsection{Complexity}

In this section, we analyze the complexity of Algorithm~\ref{algo:Parallel} using Algorithm~\ref{algo:NQP} to solve NQP problems. If we assume that the complexity for each iteration contains
$\mathcal{O}(nr^2)$ in computing $Q=G^TG$ , $\mathcal{O}(mnr)$ in computing $Y=VF^T$, $\mathcal{O}(mr^2)$ in computing $H=FF^T$,  $\mathcal{O}(\overline{k}mr^2)$ in computing $F$ and $\mathcal{O}(\overline{k}nr^2)$ in computing $G$, where $\overline{k}$ is the number of iterations, then we have the following Lemma~3:

\begin{theorem}
	\label{theo:Complexity}
	The complexity of each iteration in  Algorithm~\ref{algo:Parallel} using Algorithm~\ref{algo:NQP} to solve NQP problems is $\mathcal{O}((m+n)r^2+mnr+\overline{k}(m+n)r^2)$. In addition, it is $\mathcal{O}((m+n)r^2+rS(mn)+\overline{k}(m+n)r^2)$ for sparse data, where $S(mn)$ is the number of non-zero elements in data matrix $V$. 
\end{theorem}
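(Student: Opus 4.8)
The plan is to verify Theorem~\ref{theo:Complexity} by a straightforward bookkeeping of the floating-point operations performed in one pass through the outer \textbf{Repeat} loop of Algorithm~\ref{algo:Parallel}, using the stated per-component costs as given data. First I would enumerate the five ingredients listed just before the statement: computing $Q = G^TG$ costs $\mathcal{O}(nr^2)$ (an $r\times n$ by $n\times r$ product); computing $Y = VF^T$ costs $\mathcal{O}(mnr)$ (an $n\times m$ by $m\times r$ product — note this is the dominant dense term); computing $H = FF^T$ costs $\mathcal{O}(mr^2)$; the $m$ calls to Algorithm~\ref{algo:NQP} in the E-step cost $\mathcal{O}(\overline{k}mr^2)$ in total, since each NQP solve on an $r$-vector runs $\overline{k}$ iterations at $\mathcal{O}(r^2)$ per iteration (the per-iteration bound $\mathcal{O}(r^2)$ being exactly the invariant established when Algorithm~\ref{algo:NQP} was introduced); and symmetrically the $n$ calls in the M-step cost $\mathcal{O}(\overline{k}nr^2)$. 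Summing and absorbing the terms $nr^2 + mr^2$ into $(m+n)r^2$ gives
\[
\mathcal{O}\!\left((m+n)r^2 + mnr + \overline{k}(m+n)r^2\right),
\]
which is the first claim.

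For the sparse case, the only term that changes is the matrix–matrix product $Y = VF^T$: instead of touching all $mn$ entries of $V$, one iterates only over its $S(mn)$ nonzeros, each contributing $\mathcal{O}(r)$ work (a scaled $r$-vector update), so the cost becomes $\mathcal{O}(rS(mn))$ in place of $\mathcal{O}(mnr)$. I would note that the accumulation $Y = Y + F_iV_i^T$ and $H = H + F_iF_i^T$ inside the E-step loop, when $V_i$ is sparse, costs $\mathcal{O}(\nnz(V_i)\,r)$ and $\mathcal{O}(r^2)$ respectively, and summing $\nnz(V_i)$ over $i$ recovers $S(mn)$; the $H$-accumulation and the $Q$-computation are unaffected because $G$ and $F$ are treated as dense latent factors. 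Substituting yields
\[
\mathcal{O}\!\left((m+n)r^2 + rS(mn) + \overline{k}(m+n)r^2\right),
\]
the second claim.

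The only genuinely delicate point — and the one I would flag as the main obstacle — is justifying that each call to Algorithm~\ref{algo:NQP} really costs $\mathcal{O}(\overline{k}r^2)$ rather than something larger, i.e.\ that the inner block-coordinate-descent \textbf{For} loop over $t = 1$ to $n$ does not secretly cost $\mathcal{O}(nr^2)$ per NQP iteration. Here one must observe that the loop index should be read as running to $r$ (the dimension of the NQP variable), not to $n$; each of its $r$ passes does $\mathcal{O}(r)$ work for the gradient update $\nabla f_k \mathrel{+}= Q_p \triangle x_p$ plus $\mathcal{O}(r)$ to recompute the candidate reductions, giving $\mathcal{O}(r^2)$ for the whole block step, matching the exact-line-search step's $\mathcal{O}(r^2)$ cost. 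Granting that, and treating $\overline{k}$ as a uniform upper bound on the iteration counts across all $m+n$ subproblems, the rest is pure addition of big-$\mathcal{O}$ terms and the theorem follows.
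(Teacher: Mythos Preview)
Your proposal is correct and follows essentially the same approach as the paper: the paper's entire justification is the sentence immediately preceding the theorem, which lists the five cost components $\mathcal{O}(nr^2)$, $\mathcal{O}(mnr)$, $\mathcal{O}(mr^2)$, $\mathcal{O}(\overline{k}mr^2)$, $\mathcal{O}(\overline{k}nr^2)$ and leaves the summation implicit, with no separate proof environment. Your write-up is in fact more careful than the paper's, since you explicitly justify the sparse replacement $mnr \to rS(mn)$ via the per-instance accumulation $Y \mathrel{+}= F_iV_i^T$, and you correctly flag that the inner \textbf{For} loop in Algorithm~\ref{algo:NQP} must be read as running to $r$ rather than $n$ for the $\mathcal{O}(r^2)$ per-iteration bound to hold.
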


Theorem~\ref{theo:Complexity}  is significant for big data, because the data is usually big and sparse. In other words, $mn$ is actually large, but $S(mn)$ is small; so $mn \gg (m+n)r^2 + rS(mn) + \bar{k}(m+n)r^2$. Hence, in experimental evaluation Section~\ref{sec:Experimental}, we prove that our algorithm can run on large high-dimension sparse datasets such as Nytimes for an acceptable time. In that dataset, $mnr \gg rS(mn) \gg (m+n)r^2$, so the running time $T(m,n,r) \approx rS(mn)$ since $m, n \gg r$. 

Moreover, Table~\ref{tab:complexity} shows a comparison of the complexity in an iteration of our proposed algorithms (Alo) with other state-of-the-art algorithms' in the literature: Multiplicative Update Rule (MUR)~\cite{Lee2001}, Projected Nonnegative Least Squares (PrN)~\cite{Berry2007}, Projected Gradient (PrG)~\cite{Lin2007Pro}, Projected Quasi-Newton (PQN)~\cite{Zdunek2006}, Active Set (AcS)~\cite{Kim2008}, Block Principal Pivoting (BlP)~\cite{Kim2008Toward},  Accelerated Hierarchical Alternating Least Squares (AcH) , Fast Coordinate Descent Methods with Variable Selection (FCD)~\cite{Hsieh2011}, and Nesterov's Optimal Gradient Method (Nev)~\cite{Guan2012}. It can be seen that the complexity of our proposed algorithm is highly comparable with that of other algorithms, and the speed of algorithms depend on the number of iterations. In the experimental evaluation, we will show that the iteration number of our algorithm is highly competitive with other algorithms'. Remarkably, moreover, our proposed algorithm has the following properties that other algorithms has yet considered:

\begin{itemize}
	\item Exploit the sparseness of datasets,
	\item Runnable for big datasets in limited internal memory  systems,
	\item Convenient to implement in fully paralleled and distributed systems.
\end{itemize}

\begin{table}
	\centering
	%\begin{threeparttable}
	\caption{Complexity of an iteration in NMF solvers} 
	\label{tab:complexity} 
	\begin{tabular}[1.0]{ll} 
		\hline\noalign{\smallskip}
		\textbf{Solver} & \ \ \ \ \ \ \textbf{Complexity ($\mathcal{O}$)}\\ 
		\noalign{\smallskip}\hline\noalign{\smallskip}
		MUR~\cite{Lee2001} & $mnr + (m+n)r^2$ \\ 
		PrN~\cite{Berry2007}  & $mnr + (m+n)r^2 + r^3$ \\
		PrG~\cite{Lin2007Pro} & $(m+n)r^2+rmn+\overline{k}\overline{t}(m+n)r^2$ \\ 
		PQN~\cite{Zdunek2006} & $\overline{k}(mnr + m^3r^3 + n^3r^3)$\\ 
		BlP~\cite{Kim2008Toward} & $(m+n)r^2+mnr+\overline{k}(m+n)r^2$ \\ 
		AcS~\cite{Kim2008} & $(m+n)r^2+rmn+\overline{k}(m+n)r^2$ \\ 
		FCD~\cite{Hsieh2011} & $(m+n)r^2+rS(mn)+\overline{k}(m+n)r^2$\\
		AcH~\cite{Gillis2012HALS} & $(m+n)r^2+rS(mn)+\overline{k}(m+n)r^2$\\
		Nev~\cite{Guan2012} & $(m+n)r^2+mnr+\overline{k}(m+n)r^2$ \\ 
		Alo & $(m+n)r^2+rS(mn)+\overline{k}(m+n)r^2$ \\ 
		\noalign{\smallskip}\hline
	\end{tabular}
	\begin{tablenotes}
		\small
		\noindent where $m, n$ is the matrix size, $r$ is the number of latent components, $\overline{k}$ is the average number of iterations, $\overline{t}$ is the average number of internal iterations, and $S(mn)$ is the number of non-zero elements of data matrix $V$. 
		To easily compare among the algorithms, we consider $r$ update times for Algorithm FCD as one iteration because the complexity of one update is $\mathcal{O}(r)$, while the complexity of one iteration in other accelerated algorithms is $\mathcal{O}(r^2)$.
	\end{tablenotes}
	%\end{threeparttable}
\end{table}

\section{Experimental evaluation}
\label{sec:Experimental}

In this section, we investigate the effectiveness of the proposed algorithm \textbf{Alo} by comparing it to 7 carefully selected state-of-the-art NMF solvers belongs to different approaches:
\begin{itemize}
	\item \textbf{MUR}: Multiplicative Update Rule~\cite{Lee1999},\\
	\vspace{-0.4cm}
	\item \textbf{PrG}: Projected Gradient Methods~\cite{Lin2007Pro},\\
	\vspace{-0.4cm}
	\item \textbf{BlP}:  Block Principal Pivoting method~\cite{Kim2008Toward},\\
	\vspace{-0.4cm}
	\item \textbf{AcS}: Fast Active-set-like method~\cite{Kim2008},\\
	\vspace{-0.4cm}
	\item \textbf{FCD}: Fast Coordinate Descent methods with variable selection~\cite{Hsieh2011},\\
	\vspace{-0.4cm}
	\item \textbf{AcH}:  Accelerated Hierarchical Alternating Least Squares~\cite{Gillis2012HALS},\\
	\vspace{-0.4cm}
	\item \textbf{Nev}: Nesterov's optimal gradient method~\cite{Guan2012}.
\end{itemize}

\textbf{Test cases}: In this experiment, we design two tests using four datasets shown in Table \ref{tab:data_sets}. In the first test, 3 typical datasets with different sizes are used:
Faces\footnote{\url{http://cbcl.mit.edu/cbcl/software-datasets/FaceData.html}},
Digits\footnote{\url{http://yann.lecun.com/exdb/mnist/}}  and Tiny Images~\footnote{\url{http://horatio.cs.nyu.edu/mit/tiny/data/index.html}}.  
For these tests, the algorithms are compared in terms of convergence, optimality, and average of the iteration number to investigate their performance and effectiveness. Additionally, average of the the iteration number $\bar{k}$ for approximate solutions of the sub-problems as NNLS or NQP is to compare the complexity of algorithms. In the second test, a large dataset containing tf-idf values computed from the text dataset Nytimes\footnote{\url{https://archive.ics.uci.edu/ml/machine-learning-databases/bag-of-words/}} is used to verify the performance and the feasibility of our parallel algorithms on sparse large datasets. 
%In this paper, we focus on comparing the performance of the algorithms based on the convergence of loss information while ignoring other criteria such as the measure of mutual information because the loss information controls the quality of the algorithms having the same model.

\begin{table}
	\centering
	\caption{Dataset Information} 
	\label{tab:data_sets}
	\begin{tabular}{lcccc}
		\hline\noalign{\smallskip}
		\textbf{Data-sets} & \textbf{$m$} & \textbf{$n$}& $r$  & {MaxIter}\\ 
		\noalign{\smallskip}\hline\noalign{\smallskip}
		Faces & $6977$ & 361 & 60 & 300 \\ 
		%\rowcolor{Gray2}
		Digits & $6.10^4$ & 784 & 80 & 300 \\ 
		Tiny Images  & $5.10^4$ & 3,072 & 100 & 300 \\ 
		%\rowcolor{Gray2}
		Nytimes & $3.10^5$ & 102,660& 100,...,200 & 300 \\ 
		\noalign{\smallskip}\hline
	\end{tabular}
\end{table}

\textbf{Environment settings}: To be fair in comparison, for the first test, the programs of compared algorithms are written in the same language Matlab 2013b, run by the same computer Mac Pro 8-Core Intel Xeon E5 3 GHz RAM 32 GB, and initialized by the same factor matrices $G_0$ and $F_0$. The maximum number of threads is set to 10 while keeping 2 threads for other tasks in the operation system. For the second test, the proposed algorithm is written in Java programming language to utilize the data sparseness. 

\textbf{Source code}: The source codes of \textbf{MUR}, \textbf{PrG}, \textbf{BlP}, \textbf{AcS}, \textbf{FCD}, \textbf{AcH}, and \textbf{Nev} are downloaded from \footnote{\url{http://www.cs.toronto.edu/~dross/code/nnmf.m}}, \footnote{\url{https://github.com/kimjingu/nonnegfac-matlab}}, \footnote{\url{http://www.csie.ntu.edu.tw/~cjlin/nmf/}}, \footnote{\url{http://dl.dropboxusercontent.com/u/1609292/Acc\_MU\_HALS\_PG.zip}}, and \footnote{\url{https://sites.google.com/site/nmfsolvers/}}. For convenient comparison in the future, we publish all the source codes and datasets in \footnote{\url{https://bitbucket.org/[aaa-zzz]/alnmf}}.

\subsection{Convergence}

In this experiment, we investigate the convergence of algorithms by information loss $\frac{1}{2}||X-GF||^2_2$ in terms of time and the iteration number. In terms of time, see Fig.~\ref{fig:convergence_time}, the proposed algorithm Alo is remarkably faster than the other algorithms for the three different-size datasets: Faces, Digits and Tiny Images. Especially, for the largest dataset Tiny Images, the distinction between the proposed algorithm and the runner-up algorithm AcH is easily recognized. Furthermore,  in terms of the iteration number, see Fig.~\ref{fig:convergence_iterators}, the proposed algorithm converges to the stationary point of solutions faster than the others. This observation is clear for large datasets as Digits and Tiny Images. The results are significant in learning NMF models for big data because the proposed algorithm not only converges faster but also uses a less number of iterations, and the time of reading and optimization through a big dataset is actually considerable.

\begin{figure}
	\includegraphics[scale=1.2]{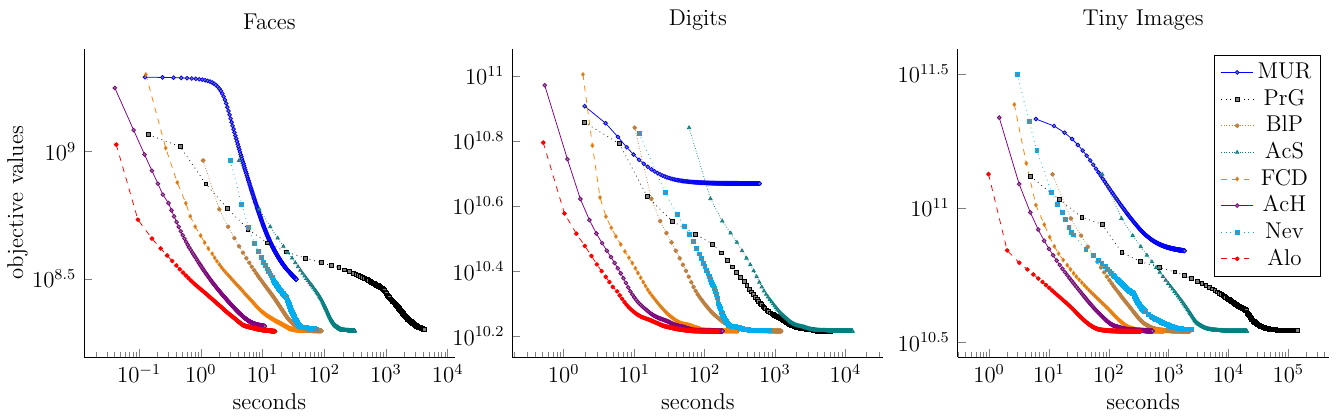}
	\caption{Objective function values $||V-GF||^2_2/2$ versus CPU seconds for datasets: Faces, Digits, and Tiny Images}
	\label{fig:convergence_time}
\end{figure}

\begin{figure}
	\includegraphics[scale=1.2]{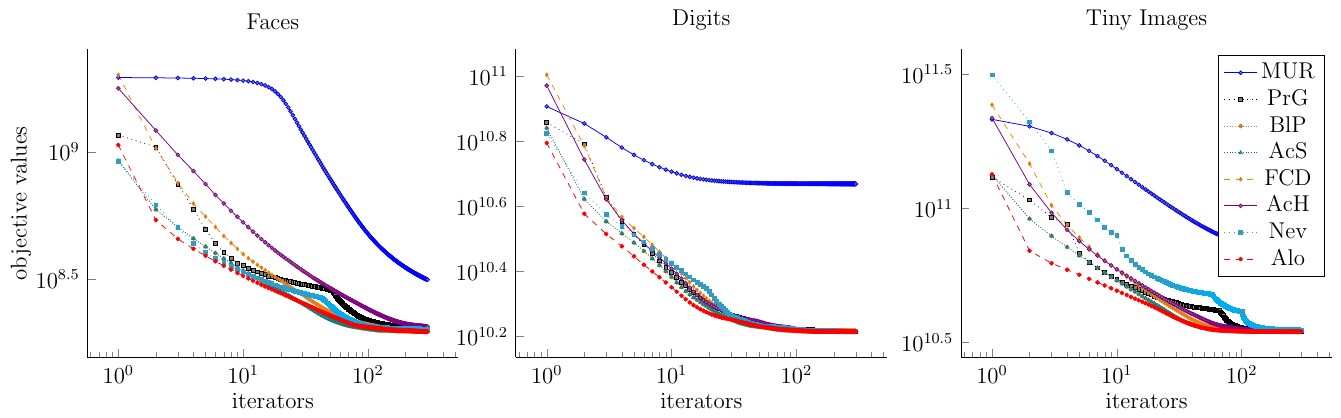}
	\caption{Objective function values $||V-GF||^2_2/2$ in terms of the iteration number for datasets: Faces, Digits, and Tiny Images}
	\label{fig:convergence_iterators}
\end{figure}

\subsection{Optimality}

After more a decade of rapid development, numerous algorithms have been proposed for solving NMF as a fundamental problem in dimension reduction and learning representation. Currently, the difference of the final loss information $||V - WH||^2_2$ among the state-of-the-art methods is inconsiderable in comparison to the square of information $||V||^2_2$. However, the small difference represents the effectiveness of the optimization methods because NMF algorithms often slowly converge when the approximate solution is close to the optimal local solution. Hence, in Table~\ref{tab:optimal}, the final values of the objective function $\frac{1}{2}||V-WH||^2_2$ investigate the optimality and the effectiveness of the optimization methods. Noticeably, Algorithm AcH fast converges over time and has a low average of the iteration number, but it has the optimal values much higher than the proposed algorithm because it uses a time-break technique to interrupt the optimization algorithm. In addition, the proposed algorithm achieves the best optimality for all three datasets. This result additionally represents the robustness of the proposed method, which is highly competitive with the state-of-the-art methods.

\begin{table}
	% table caption is above the table
	\centering
	\caption{Optimal Values of NMF solvers} \label{tab:optimal} 
	% For LaTeX tables use
	\begin{tabular}{lcccccccc}
		\hline\noalign{\smallskip}
		Dataset & MUR & PrG & BlP & AcS & FCD & AcH & Nev & Alo\\
		\noalign{\smallskip}\hline\noalign{\smallskip}
		Faces ($\times 10^{8}$) & 3.142 & 2.003 & 1.975 & 1.975 & 1.983 & 2.058 & 2.003 & \textbf{1.966} \\
		Digits ($\times 10^{10}$) & 4.659 & 1.639 & 1.641 & 1.641 & 1.644 & 1.640 & 1.646 & \textbf{1.638} \\
		Tiny Images ($\times 10^{10}$) & 6.925 & 3.483 & 3.472 & 3.472 & 3.474 & 3.484 & 3.513 & \textbf{3.468} \\
		\noalign{\smallskip}\hline
	\end{tabular}
\end{table}

\subsection{Average of iteration number}

In this section, we investigate the complexity of the NMF solvers by average of the iteration number $\bar{k} = \frac{\text{number of internal iteration}}{\text{MaxIter}(m + n)}$ for approximate solutions of sub-problems as NNLS or NQP because the complexity of algorithms mainly depends on this number, see Table~\ref{tab:complexity}. Except for the original algorithm MUR with one update having the worst result, the proposed algorithm Alo employs at least average of the iteration number, see Table~\ref{tab:iterations}, especially for large datasets. In addition, the proposed algorithm does not employ any tricks to timely interrupt before one of the stopping conditions is satisfied, while the highly competitive algorithm AcH uses. Therefore, this result clearly represents the fast convergence of Algorithm~\ref{algo:NQP} as it is verified by a large number of NQP problems.

\begin{table}
	% table caption is above the table
	\centering
	\caption{Average of Iteration Number $\bar{k}$} \label{tab:iterations} 
	% For LaTeX tables use
	\begin{tabular}{lcccccccc}
		\hline\noalign{\smallskip}
		
		Dataset & MUR & PrG & BlP & AcS & FCD & AcH & Nev & Alo\\
		
		\noalign{\smallskip}\hline\noalign{\smallskip}
		
		Faces & 1.00 & 321.12 & 1116.96 & 102.09 & 1.54 & \textbf{1.11} & 29.21 & 1.29 \\
		Digits & 1.00 & 36.70 & 12503.75 & 305.94 & \textbf{1.00} & 1.05 & 23.36 & \textbf{1.00} \\
		Tiny Images & 1.00 & 767.45 & 12869.12 & 1086.51 & 1.38 & 2.52 & 29.32 & \textbf{1.27} \\
		
		\noalign{\smallskip}\hline
	\end{tabular}
\end{table}

\subsection{Running on large datasets}

In this section, we verify the feasibility of the proposed algorithm in learning NMF model for large datasets. Particularly, the proposed algorithm is implemented by Java programming language to exploit the data sparseness. Additionally, it runs on the large sparse text dataset Nytimes with different numbers of latent components, see Table~\ref{tab:data_sets}. Interestingly, the proposed algorithm can run with hundreds of latent components by a single computer in an acceptable time.

Fig.~\ref{fig:nytimes} shows  the performance of our algorithm running on the large sparse dataset Nytimes. Remarkably, the proposed algorithm only uses about 1 iteration on average to satisfy the accelerated condition of approximate solutions. Furthermore, the average of iteration time in learning NMF model linearly increases through the different numbers of latent components. This result totally fits the complexity analysis when $rnm \gg rS(mn) \gg (m+n)r^2 + \bar{k}(m+n)r^2$, so the complexity $T(m, n, r) \approx rS(mn)$ since $m, n \gg r$. Additionally, the objective function converges to the stationary point at about the $100^\text{th}$ iteration within the different numbers of latent components $r$, which is the same with the previous datasets. 

\begin{figure}
	\includegraphics[scale=1.2]{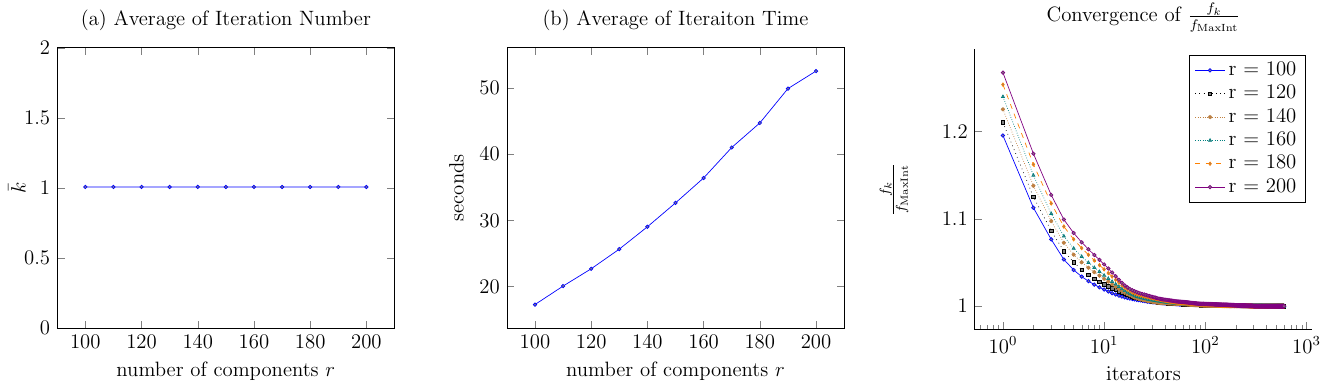}
	\caption{average of the iteration number $\bar{k}$,  average of iteration time, and convergence of $\frac{f_k}{f_\text{MaxInt}}$ in learning NMF model for the dataset Nytimes within the different numbers of latent components}
	\label{fig:nytimes}
\end{figure}

\subsection{Regularized NMF extensions}

In this section, we investigate the convergence of algorithms for regularized NMF extensions on three datasets: Faces, Digits, and Tiny Images. Due to the lack of available codes and the $L_1$ $L_2$ generalization of the other algorithms, only three algorithms AcS, Nev and Alo are compared within two regularized cases: $\mu_2=10^{-2}$ and $\mu_2=\beta_2=10^{-2}$, see Fig.~\ref{fig:regularised}. In comparison with other algorithms for regularized NMF extensions, the proposed algorithm Alo converges much faster than algorithms AcS and Nev.

\begin{figure}
	\includegraphics[scale=1.2]{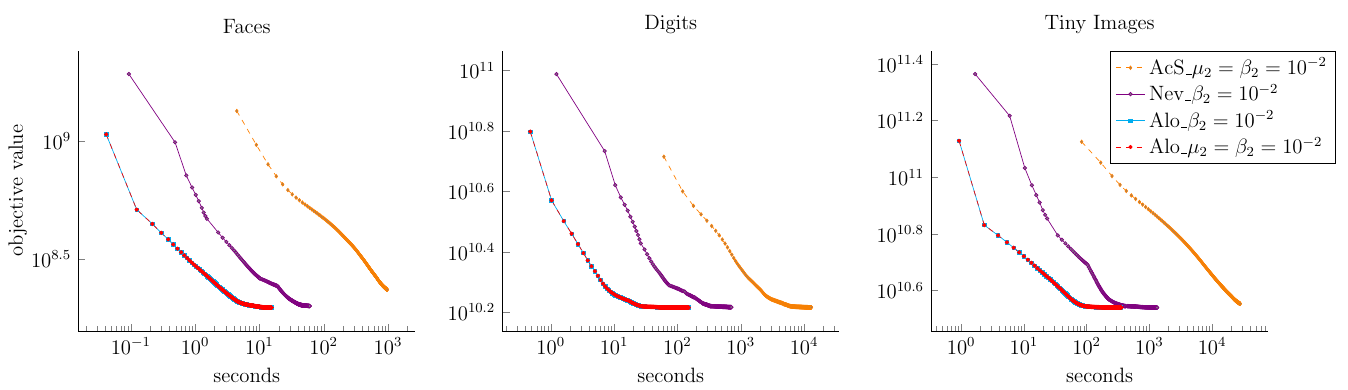}
	\caption{Convergence of regularized NMF Extensions for algorithms AcS, Nev and Alo within two regularized cases: $\mu_2=10^{-2}$ and $\mu_2=\beta_2=10^{-2}$}
	\label{fig:regularised}
\end{figure}

\section{Conclusion}
\label{sec:Conclusion}

In summary, our work has two major contributions:

Regarding nonnegative matrix factorization, we propose a flexible algorithm in an unified framework for NMF and its $L_1$ $L_2$ regularized variants based on full decomposition and a fast combinational algorithm of the anti-lopsided algorithm~\cite{Nguyen2015} and the greedy coordinate block descent algorithm~\cite{Hsieh2011}. The proposed algorithm has \textit{linear} convergence rate of $\mathcal{O}(1-\frac{1}{r})^k$ in optimizing each matrix factor in the sub-space of passive variables when fixing the other matrix, where $r$ is the number of latent components. The proposed algorithm is an advanced version of fast block coordinate descent methods and accelerated methods. In theory and practice, the proposed algorithm resolve some current major issues of NMF: fast learning algorithm, data sparseness exploit-ability, and parallel distributed feasibility using limited internal memory. Furthermore, the proposed algorithm flexibly adapts with all the variants of $L_1\ L_2$ NMF regularizations.

In experimental comparative evaluation, our algorithm overcomes 7 of the most art-the-state algorithms in large datasets about three significant aspects of convergence, average of the iteration number and optimality. In addition,  it can fully be parallelized and distributed because the computation using limited internal memory is decomposed into basic computation units as NQP problems. Concerning the feasibility in real applications, the proposed algorithm exploits the data sparseness to learn the huge sparse dataset Nytimes in an acceptable time by a single machine. Finally, the convergence of the proposed algorithm for $L_1 L_2$regularized NMF variants is much faster than that of the existing algorithms.

Concerning the optimization techniques for alternating least squares methods, we propose a fast algorithm, Algorithm~\ref{algo:NQP} for NQP problems, which not only has a linear convergence in theory but also is verified in practice about the three significant aspects by a large number of NQP problems conducted inside the NMF framework. Hence, we strongly believe that the algorithm can be effectively employed for alternating least square methods as the key problem in factorization methods.

\section*{Acknowledgement}
This work was supported by Asian Office of Aerospace R\&D under agreement number FA2386-13-1-4046; and 911 Scholarship from Vietnam Ministry of Education and Training.

\bibliography{NMFRef}   % name your BibTeX data base

\end{document}